\begin{document}

\allowdisplaybreaks

\renewcommand{\thefootnote}{$\star$}

\newcommand{\arXivNumber}{1510.01901}

\renewcommand{\PaperNumber}{101}

\FirstPageHeading

\ShortArticleName{Hankel Determinants of Zeta Values}

\ArticleName{Hankel Determinants of Zeta Values\footnote{This paper is a~contribution to the Special Issue
on Orthogonal Polynomials, Special Functions and Applications.
The full collection is available at \href{http://www.emis.de/journals/SIGMA/OPSFA2015.html}{http://www.emis.de/journals/SIGMA/OPSFA2015.html}}}

\Author{Alan {HAYNES}~$^\dag$ and Wadim {ZUDILIN}~$^\ddag$}

\AuthorNameForHeading{A.~Haynes and W.~Zudilin}

\Address{$^\dag$~Department of Mathematics, University of York,\\ \phantom{$^\dag$}~York, YO10\,5DD, UK}
\EmailD{\href{mailto:akh502@york.ac.uk}{akh502@york.ac.uk}}
\URLaddressD{\url{http://www-users.york.ac.uk/~akh502/}}

\Address{$^\ddag$~School of Mathematical and Physical Sciences, The University of Newcastle,\\ \phantom{$^\ddag$}~Callaghan NSW 2308, Australia}
\EmailD{\href{mailto:wzudilin@gmail.com}{wzudilin@gmail.com}}
\URLaddressD{\url{http://wain.mi.ras.ru/}}

\ArticleDates{Received October 19, 2015, in f\/inal form December 16, 2015; Published online December 17, 2015}

\Abstract{We study the asymptotics of Hankel determinants constructed using the values $\zeta(an+b)$ of the Riemann zeta function at positive
integers in an arithmetic progression. Our principal result is a Diophantine application of the asymptotics.}

\Keywords{irrationality; Hankel determinant; zeta value}

\Classification{11J72; 11C20; 11M06; 41A60}

\renewcommand{\thefootnote}{\arabic{footnote}}
\setcounter{footnote}{0}

\section{Introduction}
\label{intro}

In the recent work \cite{Mo09}, H.~Monien investigated analytic aspects of the Hankel determinants
\begin{gather*}
H_n^{(r)}=H_n^{(r)}[\zeta]=\det_{1\le i,j\le n}\bigl(\zeta(i+j+r)\bigr)
\qquad\text{for}\quad r=0,1,
\end{gather*}
where $\zeta(s)$ denotes the Riemann zeta function. He
also studied more general determinants constructed using values of Dirichlet series. One focus of that work was the asymptotic behaviour
of $H_n^{(0)}$ and $H_n^{(1)}$ as $n\to\infty$, and a heuristic justif\/ication for the simplif\/ied asymptotic formula
\begin{gather*}
\log H_n^{(r)}=-n^2\log n+O\big(n^2\big) \qquad\text{as}\quad n\to\infty\quad\text{for}\quad r\ge 0.
\end{gather*}

In \cite{Mo11} Monien developed these ideas further and rigorously justif\/ied
the above asymptotics in the case $r=0$, by explicitly constructing a family of orthogonal polynomials related to the corresponding
Riemann--Hilbert problem. However, his approach does not readily generalize to prove the expected asymptotics for determinants built
on the zeta values $\zeta(an+b)$ along an arithmetic progression, which are more interesting from an arithmetical point of view.
To be precise, for positive integers $a$ and $b$, we expect that
\begin{gather*}
\log\det_{1\le i,j\le n}\bigl(\zeta(a(i+j)+b)\bigr)=-an^2\log n+O\big(n^2\big) \qquad\text{as}\quad  n\to\infty.
\end{gather*}
This type of result is of interest, for example, when~$a$ is even and~$b$ is odd, so that the obviously irrational (and transcendental)
zeta values at positive even integers are excluded.

In this brief note we demonstrate how elementary means can be used to prove the weaker asymptotic inequality
\begin{gather*}
\log\Bigl|\det_{1\le i,j\le n}\bigl(\zeta(a(i+j)+b)\bigr)\Bigr|\le-\frac a2 n^2\log n+O\big(n^2\big) \qquad \text{as}\quad  n\to\infty,
\end{gather*}
which leads us to the following arithmetic application.

\begin{theorem}
\label{th-main}
For any pair of positive integers $a$ and $b$, either there are infinitely many $n\in\mathbb{N}$ for which $\zeta(an+b)$ is irrational,
or the sequence $\{q_n\}_{n=1}^\infty$ of common denominators of the rational elements of the set $\{\zeta(a+b),\zeta(2a+b),\dots,\zeta(an+b)\}$
grows super-exponentially, i.e., $q_n^{1/n}\to\infty$ as $n\to\infty$.
\end{theorem}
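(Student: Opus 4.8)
The plan is to argue by contradiction: suppose the first alternative fails, so that only finitely many $\zeta(an+b)$ are irrational, and fix $N_0$ with $\zeta(an+b)\in\mathbb Q$ for all $n\ge N_0$. I will deduce $q_n^{1/n}\to\infty$ from the asymptotic upper bound displayed above.

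The only technical nuisance is that the Hankel matrix $\bigl(\zeta(a(i+j)+b)\bigr)_{1\le i,j\le n}$ may carry finitely many irrational entries near its upper-left corner, blocking a direct clearing of denominators. I bypass this by shifting indices: put $s=N_0$ and $\widetilde H_n=\det_{1\le i,j\le n}\bigl(\zeta(a(i+j+s)+b)\bigr)$. With $b'=as+b$, a positive integer, this equals $\det_{1\le i,j\le n}\bigl(\zeta(a(i+j)+b')\bigr)$, so the asymptotic inequality applies with $b'$ in place of $b$ and gives $\log|\widetilde H_n|\le-\frac a2 n^2\log n+O(n^2)$. At the same time every entry of $\widetilde H_n$ is $\zeta(am+b)$ with $m=i+j+s\ge s+2>N_0$, hence rational with denominator dividing $q_{s+2n}$; multiplying each of the $n$ rows by $q_{s+2n}$ makes the matrix integral, so $q_{s+2n}^{\,n}\,\widetilde H_n\in\mathbb Z$.

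It remains to check $\widetilde H_n\ne0$. Using $\zeta(c)=\sum_{k\ge1}k^{-c}$ we have $\zeta(a(i+j+s)+b)=\int_0^1 t^{\,i+j}\,\mathrm d\mu(t)$ for the positive atomic measure $\mu=\sum_{k\ge1}k^{-as-b}\delta_{k^{-a}}$, which has infinite support and finite moments. Hence its moment matrix $\bigl(\int t^{\,p+q}\,\mathrm d\mu\bigr)_{p,q\ge0}$ is positive definite, and $\widetilde H_n$ is the determinant of the principal submatrix of that matrix indexed by $\{1,\dots,n\}$; therefore $\widetilde H_n>0$. Consequently $q_{s+2n}^{\,n}\widetilde H_n$ is a nonzero integer, whence $|\widetilde H_n|\ge q_{s+2n}^{-n}$, i.e.\ $\log|\widetilde H_n|\ge-n\log q_{s+2n}$.

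Combining the two estimates, $\log q_{s+2n}\ge\frac a2 n\log n-O(n)$; writing $N=s+2n$ this reads $\log q_N\ge\frac a4 N\log N-O(N)$, so $N^{-1}\log q_N\to\infty$ along this arithmetic progression of values of $N$. Since $q_N\mid q_{N+1}$, the sequence $\{q_n\}$ is non-decreasing, which promotes the conclusion to $q_n^{1/n}\to\infty$ for all $n$, completing the argument. I expect all the genuine work to lie in the asymptotic upper bound assumed above; within this deduction the only delicate point is the non-vanishing $\widetilde H_n\ne0$ --- equivalently, that zeta values along an arithmetic progression form a positive-definite moment sequence --- and this is immediate from the moment-measure description, the index shift being simply the device that absorbs the finitely many possibly irrational zeta values.
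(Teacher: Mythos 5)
Your argument is correct, and its skeleton---absorb the finitely many possibly irrational values by replacing $b$ with $as+b$, clear denominators to get a nonzero integer multiple of the Hankel determinant, and play the resulting lower bound $|\widetilde H_n|\ge q_{s+2n}^{-n}$ against the upper bound $\log|\widetilde H_n|\le-\frac a2n^2\log n+O(n^2)$ of Lemma~\ref{L-gest}---is exactly the paper's. The one genuinely different component is the non-vanishing step. The paper proves it by showing (Lemma~\ref{LW1}) that the generating function $\sum_m\zeta(n_m)z^m$ is not a rational function, and then invoking Kronecker's theorem to conclude that infinitely many of the Hankel determinants are nonzero (Lemma~\ref{LW2}); ``infinitely many'' suffices for the contradiction. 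You instead observe that $\zeta(a(i+j+s)+b)=\int_0^1t^{i+j}\,{\mathrm d}\mu(t)$ for the atomic measure $\mu=\sum_k k^{-as-b}\delta_{k^{-a}}$ with infinite support, so the matrix is a Gram matrix of $t,t^2,\dots,t^n$ and every determinant is strictly positive. This is precisely the alternative the paper flags in the remark after Lemma~\ref{LW2} (attributed to Monien's argument), and in the arithmetic-progression case it is shorter and yields more (positivity for all $n$, not just non-vanishing for infinitely many). What the Kronecker route buys in exchange is generality: it works for an arbitrary increasing sequence of exponents and for Dirichlet series whose coefficients are not positive, which is what the concluding remarks of the paper rely on. Two cosmetic points: when $s=0$ and $b=1$ the measure $\mu$ has infinite total mass, so the ``full'' moment matrix $\bigl(\int t^{p+q}\,{\mathrm d}\mu\bigr)_{p,q\ge0}$ is not defined at $(0,0)$; this is harmless because your submatrix only uses moments of order at least $2$, and the positive-definiteness is best phrased directly as $\sum_{i,j}c_ic_j\int t^{i+j}\,{\mathrm d}\mu=\int\bigl(\sum_ic_it^i\bigr)^2{\mathrm d}\mu>0$ for $c\ne0$. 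Your final monotonicity step ($q_N\mid q_{N+1}$, hence $q_n$ non-decreasing) correctly upgrades the divergence along $N=s+2n$ to the full limit $q_n^{1/n}\to\infty$, and is in fact slightly more careful on this point than the paper's own closing sentence.
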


The current knowledge about the arithmetic of odd zeta values~-- the numbers $\zeta (n)$ for $n\ge 2$ odd~-- can be summarised as follows.
In 1978 R.~Ap\'ery proved~\cite{Ap79} the irrationality of $\zeta(3)$, and in 2000 K.~Ball and T.~Rivoal showed \cite{BR01} that
there are inf\/initely many irrational numbers among the odd zeta values.
At the same time, it is widely believed that all of the numbers $\zeta (n)$, for $n\ge 2$, are irrational and transcendental.
Our result in Theorem~\ref{th-main} is very far from proving this. The goal of this paper is, rather, to demonstrate how very simple analytic arguments
can be used to derive some information about the Diophantine approximation properties of these numbers.

\section{Asymptotic upper bounds}
\label{est}

Suppose that $\{a_n\}_{n=1}^\infty$ is a sequence of complex numbers which satisf\/ies
$a_n\ll n^{1-\delta}$ for some $\delta>0$. Then the Dirichlet series
\begin{gather*}
f(s)=\sum_{n=1}^\infty\frac{a_n}{n^s}
\end{gather*}
converges in the region $\operatorname{Re}(s)>\sigma_0$ for some $\sigma_0>2-\delta$.
For each $n\in\mathbb Z_{>0}$, let
\begin{gather*}
H_n[f]=\det_{1\le i,j\le n}\bigl(f(i+j)\bigr).
\end{gather*}

\begin{lemma}
\label{L-est}
As $n\to\infty$, the following estimate is valid:
\begin{gather*}
\log |H_n[f]|\le -\frac12 n^2\log n+O\big(n^2\big).
\end{gather*}
\end{lemma}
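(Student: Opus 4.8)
The plan is to represent the Hankel determinant $H_n[f]$ as a single multidimensional integral and then bound it crudely. Using the Dirichlet series expansion $f(i+j)=\sum_{k\ge 1}a_k k^{-(i+j)}$, we can write each entry $f(i+j)$ as $\int_0^1 x^{i+j-1}\,\mathrm{d}\mu(x)$ for a suitable (signed) measure $\mu$ on $[0,1]$, or more concretely substitute $x=1/k$ so that $H_n[f]=\det_{1\le i,j\le n}\bigl(\sum_k a_k x_k^{i+j}\bigr)$ in the variable $x_k=1/k$. Expanding the determinant by multilinearity over the summation indices and invoking the Cauchy--Binet (Andr\'eief) identity, we obtain
\begin{gather*}
H_n[f]=\sum_{1\le k_1<k_2<\dots<k_n}a_{k_1}a_{k_2}\dotsm a_{k_n}\cdot\frac1{(k_1k_2\dotsm k_n)^2}\prod_{1\le i<j\le n}\biggl(\frac1{k_i}-\frac1{k_j}\biggr)^2,
\end{gather*}
the Vandermonde squared coming from the two Vandermonde factors produced by Andr\'eief's identity applied to the functions $x\mapsto x^i$ and $x\mapsto x^j$ (after pulling out one power of each $x_{k}$ to account for the shift). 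This is the key structural step.

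Next I would estimate the general term. The Vandermonde factor satisfies $\prod_{i<j}(1/k_i-1/k_j)^2\le\prod_{i<j}(1/k_i)^2=(k_1\dotsm k_n)^{-2(n-1)}$ trivially, since $0<1/k_j<1/k_i$ for $i<j$ in an increasing tuple; combined with the factor $(k_1\dotsm k_n)^{-2}$ this gives $(k_1\dotsm k_n)^{-2n}$. Using the hypothesis $a_k\ll k^{1-\delta}$, the $a$-product is $\ll(k_1\dotsm k_n)^{1-\delta}$. Therefore each term is $\ll(k_1\dotsm k_n)^{-2n+1-\delta}$, and summing over all increasing tuples $1\le k_1<\dots<k_n$ is dominated by summing over all tuples with $k_i\ge i$, which forces $k_1\dotsm k_n\ge n!$. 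Hence
\begin{gather*}
|H_n[f]|\ll\sum_{k_1,\dots,k_n\ge 1}\frac{1}{(k_1\dotsm k_n)^{2n-1+\delta}}\Bigl/(\text{constraint})\ \ll\ \frac{C^n}{(n!)^{\,2n-O(1)}},
\end{gather*}
and by Stirling $\log(n!)=n\log n+O(n)$, so $\log|H_n[f]|\le -2n\cdot n\log n+O(n^2)$. This is a factor of $2$ better than needed, which is a comfortable margin; in fact even the much weaker bound $|a_k|\ll 1$ and the crudest Vandermonde estimate suffice to reach $-\tfrac12 n^2\log n+O(n^2)$, so I would not worry about optimizing constants and would simply state the chain of inequalities leading to the claimed $-\tfrac12 n^2\log n+O(n^2)$.

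The main obstacle is getting the combinatorial identity for $H_n[f]$ cleanly — specifically, justifying the interchange of the (infinite) summation with the determinant and correctly tracking the exponents so that the Vandermonde-squared and the extra $(k_1\dotsm k_n)^{-2}$ factor come out right. Convergence is not a real issue here because the Dirichlet series converges absolutely for $\mathrm{Re}(s)>\sigma_0$ and all the arguments $i+j\ge 2$ we use lie well inside (or we can shift $f$ by finitely many terms to ensure this), so Fubini applies to the finite-dimensional determinant expansion. Once the identity is in hand, the estimate is routine: bound the Vandermonde factor by $1$ times the leading diagonal contribution, bound the $a_k$'s by the growth hypothesis, reduce to $k_i\ge i$, and apply Stirling.
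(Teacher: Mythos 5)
Your structural identity is correct: writing $f(i+j)=\sum_k a_k x_k^i\cdot x_k^j$ with $x_k=1/k$ and applying Cauchy--Binet (Andr\'eief) does give
\begin{gather*}
H_n[f]=\sum_{1\le k_1<\dots<k_n}\frac{a_{k_1}\dotsb a_{k_n}}{(k_1\dotsb k_n)^2}\prod_{1\le i<j\le n}\biggl(\frac1{k_i}-\frac1{k_j}\biggr)^2,
\end{gather*}
and this is a genuinely different route from the paper's, which expands only over the rows to obtain a \emph{single} Vandermonde times the asymmetric diagonal factor $k_1^{-2}k_2^{-3}\dotsb k_n^{-(n+1)}$, summed over all tuples, and then symmetrizes at the cost of a harmless factor $n!$.

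However, your estimation step contains a genuine error. For an increasing tuple one has $\prod_{1\le i<j\le n}(1/k_i)^2=\prod_{i=1}^n k_i^{-2(n-i)}$, since the index $i$ occurs as the smaller member of exactly $n-i$ pairs; this is not $(k_1\dotsb k_n)^{-2(n-1)}$, and the inequality $\prod_{i<j}(1/k_i-1/k_j)^2\le(k_1\dotsb k_n)^{-2(n-1)}$ already fails for $n=2$ and $(k_1,k_2)=(1,3)$, where the left side is $4/9$ and the right side is $1/9$. Your advertised conclusion $-2n^2\log n+O(n^2)$ cannot be correct in any case: for $f=\zeta$ the determinant satisfies $\log H_n=-n^2\log n+O(n^2)$ (Monien), so no valid general upper bound can be stronger than $-n^2\log n$. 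The good news is that the corrected bookkeeping rescues your argument and still beats the lemma: each term is $\ll\prod_i k_i^{-1-\delta}\prod_i k_i^{-2(n-i)}\le\prod_i k_i^{-1-\delta}\prod_i i^{-2(n-i)}$ using $k_i\ge i$, and $2\sum_{i=1}^n(n-i)\log i=n^2\log n+O(n^2)$, which yields $\log|H_n[f]|\le-n^2\log n+O(n^2)$. One further caveat: your closing remark that ``the crudest Vandermonde estimate suffices'' is wrong for this decomposition. The prefactor $(k_1\dotsb k_n)^{-2}$ contributes only $\exp(-2n\log n+O(n))$, so bounding the Vandermonde-squared factor by $1$ gives nothing of the required strength; in your decomposition all of the $n^2\log n$ decay lives in the Vandermonde, whereas in the paper's version the roles are reversed (the Vandermonde is bounded by $1$ and the decay comes from the diagonal factor $k_1^2\dotsb k_n^{n+1}$).
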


\begin{proof}
Using the linearity of determinant with respect to each row and the formula for the Vandermonde determinant we have that
\begin{gather*}
H_n[f]
=\sum_{k_1=1}^\infty\dotsi\sum_{k_n=1}^\infty\frac{a_{k_1}\dotsb a_{k_n}}{k_1^2k_2^3\cdots k_n^{n+1}}\prod_{1\le i<j\le n}\big(k_i^{-1}-k_j^{-1}\big).
\end{gather*}
Now by considering one of the $n!$ possible orderings of the integers $k_1,\dots,k_n$, and using our assumption on the numbers~$a_n$ we obtain that
\begin{gather*}
|H_n[f]|
 \le n!\sum_{1\le k_1<\dots<k_n}\frac{a_{k_1}\dotsb a_{k_n}}{k_1^2k_2^3\cdots k_n^{n+1}}
\ll n!\sum_{k_1=1}^\infty\dotsi\sum_{k_n=1}^\infty\frac{1}{k_1^{1+\delta}k_2^{1+\delta}\dotsb k_n^{1+\delta}}\left(\prod_{i=1}^{n}i^{i-1}\right)^{-1}
\\
\hphantom{|H_n[f]|}{}
\ll n!\exp\left(-\frac12 n^2\log n+O\big(n^2\big)\right),
\end{gather*}
where the estimate
\begin{gather*}
\sum_{i=1}^n(i-1)\log i>\int_2^n(x-2)\log x\,{\mathrm d}x \qquad\text{for}\quad n\ge3
\end{gather*}
is invoked. Finally, taking logarithms gives the desired result.
\end{proof}

As a slight generalization of the above argument, let $a$ and $b$ be positive integers and consider the Hankel determinants
\begin{gather*}
H_n^{(a,b)}[f]=\det_{1\le i,j\le n}\bigl(f(a(i+j)+b)\bigr).
\end{gather*}
By the same steps as before, we obtain that
\begin{gather*}
|H_n^{(a,b)}[f]|\ll n!\left(\prod_{i=1}^{n}i^{ai}\right)^{-1}=n!\exp\left(-\frac a2 n^2\log n+O\big(n^2\big)\right),
\end{gather*}
and taking logarithms gives us the following result.

\begin{lemma}
\label{L-gest}
As $n\to\infty$, the following estimate is valid:
\begin{gather*}
\log |H_n^{(a,b)}[f]|\le -\frac a2 n^2\log n+O\big(n^2\big).
\end{gather*}
\end{lemma}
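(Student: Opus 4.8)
The plan is to repeat the proof of Lemma~\ref{L-est} with the exponent $i+j$ replaced everywhere by $a(i+j)+b$. Expanding $H_n^{(a,b)}[f]$ by linearity of the determinant in each row — writing the $i$-th row as $\sum_{k_i\ge1}a_{k_i}k_i^{-(ai+b)}\bigl(k_i^{-a},k_i^{-2a},\dots,k_i^{-na}\bigr)$ — and evaluating the resulting Vandermonde determinant in the variables $k_i^{-a}$, one obtains
\begin{gather*}
H_n^{(a,b)}[f]=\sum_{k_1=1}^\infty\dotsi\sum_{k_n=1}^\infty\frac{a_{k_1}\dotsb a_{k_n}}{k_1^{2a+b}k_2^{3a+b}\cdots k_n^{(n+1)a+b}}\prod_{1\le i<j\le n}\bigl(k_i^{-a}-k_j^{-a}\bigr),
\end{gather*}
which is the exact analogue of the formula displayed in the proof of Lemma~\ref{L-est}.

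I would then estimate exactly as before. Passing to absolute values and retaining only the increasing orderings $1\le k_1<\dots<k_n$ at the cost of a factor $n!$, every factor $k_i^{-a}-k_j^{-a}$ with $i<j$ lies in $(0,1)$, so the Vandermonde product is at most $1$ and can be discarded. Using the hypothesis $a_m\ll m^{1-\delta}$ together with the bound $k_i\ge i$, the $i$-th factor of the remaining product satisfies
\begin{gather*}
\frac{|a_{k_i}|}{k_i^{(i+1)a+b}}\ll\frac1{k_i^{(i+1)a+b-1+\delta}}=\frac1{k_i^{1+\delta}}\cdot\frac1{k_i^{(i+1)a+b-2}}\le\frac1{k_i^{1+\delta}}\cdot\frac1{i^{ai}},
\end{gather*}
where the last inequality uses $(i+1)a+b-2=ai+(a+b-2)\ge ai$, valid since $a,b\ge1$. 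Summing each factor $k_i^{-(1+\delta)}$ over $\mathbb Z_{>0}$ contributes $\zeta(1+\delta)^n$, whence
\begin{gather*}
\bigl|H_n^{(a,b)}[f]\bigr|\ll n!\,\zeta(1+\delta)^n\Bigl(\prod_{i=1}^n i^{ai}\Bigr)^{-1}\ll n!\exp\Bigl(-\frac a2 n^2\log n+O\bigl(n^2\bigr)\Bigr),
\end{gather*}
the final step using $\sum_{i=1}^n i\log i>\int_1^n x\log x\,{\mathrm d}x=\tfrac12 n^2\log n+O(n^2)$. Taking logarithms and absorbing $\log n!=O(n^2)$ gives the claim.

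There is essentially no obstacle here: the argument is a line-by-line transcription of the preceding one. The only point that needs a moment's attention is the exponent bookkeeping — one must peel off precisely the convergent factor $k_i^{-(1+\delta)}$ and then verify that the leftover power $(i+1)a+b-2$ of $i$ is at least $ai$, which is exactly what produces the product $\prod_{i=1}^n i^{ai}$ and hence the coefficient $-a/2$.
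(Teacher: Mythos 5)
Your proposal is correct and is essentially identical to the paper's argument: the paper proves Lemma~\ref{L-gest} simply by invoking ``the same steps as before'' from Lemma~\ref{L-est} and recording the resulting bound $|H_n^{(a,b)}[f]|\ll n!\bigl(\prod_{i=1}^n i^{ai}\bigr)^{-1}$, which is exactly what your exponent bookkeeping produces. You have merely written out the details (the Vandermonde expansion in the variables $k_i^{-a}$, the peeling off of $k_i^{-(1+\delta)}$, and the inequality $(i+1)a+b-2\ge ai$) that the paper leaves implicit.
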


\section{Nonvanishing}
\label{non}

In this section, suppose that $2\le n_1<n_2<\dots<n_m<\dotsb$ is an arbitrary sequence, and assume that the corresponding zeta values
$\zeta(n_1),\zeta(n_2),\dots,\zeta(n_m),\dots$ are \emph{rational} numbers.

\begin{lemma}
\label{LW1}
The function $f(z)=\sum\limits_{m=1}^\infty \zeta(n_m)z^m$ is not rational.
\end{lemma}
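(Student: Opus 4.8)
The plan is to argue by contradiction: assume $f$ is rational. Writing $\zeta(n)=\sum_{k\ge1}k^{-n}$ and interchanging the summations, one gets $f=\phi_1+\sum_{k\ge2}\phi_k$, where $\phi_1(z)=z/(1-z)$ and $\phi_k(z)=\sum_{m\ge1}k^{-n_m}z^m$; the series $\phi_k$ has radius of convergence $k^{\alpha}$, with $\alpha:=\liminf_{m\to\infty}n_m/m\ge1$, and $g:=f-\phi_1=\sum_{k\ge2}\phi_k$ has radius $2^{\alpha}$. If $\alpha=\infty$, then $g$ is entire; being also rational it is then a polynomial, so $\zeta(n_m)-1=0$ for all large $m$, which is impossible since $\zeta(s)>1$ for real $s>1$. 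Hence $\alpha<\infty$.

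The mechanism for the contradiction is that peeling finitely many terms off $g$ leaves a tail $\sum_{k>K}\phi_k$ which is analytic on the growing disk $|z|<(K+1)^{\alpha}$, whereas each $\phi_k$ itself, having positive coefficients $k^{-n_m}$ and finite radius $k^{\alpha}$, is by Pringsheim's theorem singular at $z=k^{\alpha}$; this should make $g$ singular at $2^{\alpha},3^{\alpha},4^{\alpha},\dots$, contradicting the finiteness of the set of poles of a rational function. The idea is transparent when $(n_m)$ is eventually an arithmetic progression $n_m=am+b$ for $m\ge M$: a termwise geometric summation then gives, for $|z|<2^{a}$,
\[
g(z)=P_0(z)+z^{M}\sum_{k\ge2}\frac{k^{-b-aM}}{1-z/k^{a}},\qquad P_0(z)=\sum_{m=1}^{M-1}\bigl(\zeta(n_m)-1\bigr)z^m ,
\]
and splitting off the $k=2$ summand (whose complement converges on $|z|<3^{a}$) exhibits a simple pole of $g$ at $z=2^{a}$ after analytic continuation; iterating $k=3,4,\dots$ yields simple poles at $2^{a},3^{a},4^{a},\dots$, so $g$, hence $f$, is not rational.

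Everything thus reduces to showing that rationality of $f$ forces $(n_m)$ to be eventually a finite union of arithmetic progressions with a common slope $\alpha$ (so $\alpha L\in\mathbb Z$, where $L$ is the number of progressions); the computation above then applies, now producing infinitely many poles of $f$ at the points $\omega k^{\alpha}$ with $\omega^{L}=1$ and $k\ge2$. This structural step is the crux. Since $g$ is rational, the coefficients $c_m:=\zeta(n_m)-1$ satisfy a linear recurrence, so $c_m=A\,m^{e-1}2^{-\alpha m}+(\text{a bounded almost-periodic perturbation of the same order})+O\bigl(2^{-\alpha' m}\bigr)$ for suitable $e\ge1$ and $\alpha'>\alpha$, with $A>0$ by a refinement of Pringsheim's theorem using $c_m>0$. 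Comparing with $c_m=2^{-n_m}\bigl(1+o(1)\bigr)$ and taking logarithms, the constraint $n_m\in\mathbb Z$ forces the $\log m$ term to vanish (so $e=1$), pins the slope, and — together with an equidistribution argument that rules out genuinely aperiodic oscillation of $n_m-\alpha m$ — forces $n_m-\alpha m$ to be eventually periodic, as required. The main obstacle is precisely this interplay between the analytic shape of a linearly recurrent sequence and the arithmetic rigidity of an integer sequence.
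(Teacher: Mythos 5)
Your opening moves are fine (the decomposition $f=\phi_1+\sum_{k\ge2}\phi_k$, the radius computations, the disposal of the case $\alpha=\infty$, and the explicit pole analysis when $n_m$ is eventually an arithmetic progression), but the argument has a genuine gap at exactly the point you flag as ``the crux''. You cannot get singularities of $g$ at $3^{\alpha},4^{\alpha},\dots$ by peeling off $\phi_2,\phi_3,\dots$ and applying Pringsheim to the tails, because $\phi_2$ is an arbitrary power series with radius of convergence $2^{\alpha}$ and need not continue analytically past its circle of convergence; so the singularity of $\sum_{k\ge3}\phi_k$ at $3^{\alpha}$ says nothing about $g$ there. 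This is why you retreat to the structural claim that rationality of $f$ forces $(n_m)$ to be eventually a finite union of arithmetic progressions with a common slope. That claim is the entire difficulty, and your sketch of it is not a proof: the asserted asymptotic $c_m=A\,m^{e-1}2^{-\alpha m}+(\text{almost-periodic perturbation})+O(2^{-\alpha' m})$ with $A>0$, the elimination of the $\log m$ term, and above all the ``equidistribution argument that rules out genuinely aperiodic oscillation of $n_m-\alpha m$'' are all left as unproven assertions. In particular, controlling the almost-periodic part $\sum_{|\lambda_i|=2^{-\alpha}}P_i(m)\lambda_i^m$ well enough to force $n_m-\alpha m$ to be eventually periodic is a serious piece of work (the dominant poles of a rational function with positive coefficients can conspire in nontrivial ways on the circle $|z|=2^{\alpha}$), and nothing in your write-up carries it out. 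As it stands, the proposal is a plausible programme, not a proof.

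For comparison, the paper's argument avoids singularity analysis entirely and is far more elementary: assuming $f$ rational, the coefficients satisfy a linear recurrence $\sum_{j=0}^{k}r_j\zeta(n_{m+j})=0$ with rational $r_j$ and $r_0,r_k\ne0$; substituting $\zeta(n_{m+j})=\sum_{\ell\ge1}\ell^{-n_{m+j}}$ and letting $m\to\infty$ strips off the $\ell=1$ term and yields $\sum_j r_j=0$, then multiplying the remaining identity by $2^{n_m}$ and passing to the limit strips off $\ell=2$, and so on. Iterating gives the relations $\sum_{j}r_j\ell^{-n_{m+j}}=0$ for $\ell=1,2,\dots$, which form a generalized Vandermonde system forcing $r_0=\dots=r_k=0$, a contradiction. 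You may want to note that your key unproved structural step is precisely what this limiting argument circumvents.
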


\begin{proof}
Suppose that the statement of the lemma is not true. Then the quantities $\zeta(n_m)$ satisfy a linear recurrence with constant coef\/f\/icients.
By our hypothesis that the numbers~$\zeta(n_m)$ are all rational, we may assume that the coef\/f\/icients in the recurrence equation are all rational numbers,
in other words that the recurrence rule is given by
\begin{gather*}
r_0\zeta(n_m)+r_1\zeta(n_{m+1})+\dots+r_k\zeta(n_{m+k})=0
\qquad\text{for}\quad m=m_1,m_1+1,\dots,
\end{gather*}
with $r_0,\ldots, r_k\in\mathbb{Q}$ and $r_0,r_k\not= 0$.
Then, for all $m\ge m_1$, we have that
\begin{gather*}
\sum_{\ell=1}^\infty\left(\frac{r_0}{\ell^{n_m}}+\frac{r_1}{\ell^{n_{m+1}}}+\dots+\frac{r_k}{\ell^{n_{m+k}}}\right)=0,
\end{gather*}
and taking the limit as $m\to\infty$ this gives that
\begin{gather*}
r_0+r_1+\dots+r_k=0.
\end{gather*}
This in turn implies that
\begin{gather*}
\sum_{\ell=2}^\infty\left(\frac{r_0}{\ell^{n_m}}+\frac{r_1}{\ell^{n_{m+1}}}+\dots+\frac{r_k}{\ell^{n_{m+k}}}\right)=0.
\end{gather*}
Multiplying by $2^{n_m}$ and taking the limit as $m\to\infty$ (which clearly exists since the right-hand side is~$0$), we obtain that
\begin{gather*}
\lim_{m\to\infty}\sum_{\ell=2}^\infty\left(\frac{r_0}{(\ell/2)^{n_m}}+\frac{r_1}{\ell^{n_{m+1}}/2^{n_m}}+\dots+\frac{r_k}{\ell^{n_{m+k}}/2^{n_m}}\right)\\
\qquad{}=\lim_{m\to\infty}\left(r_0+\frac{r_1}{2^{n_{m+1}-n_m}}+\dots+\frac{r_k}{2^{n_{m+k}-n_m}}\right)=0.
\end{gather*}
Since $r_0\ne 0$ and $n_m<n_{m+1}<\dots<n_{m+k}$, the f\/inal equality here implies that the quantity
\begin{gather*}
r_0+\frac{r_1}{2^{n_{m+1}-n_m}}+\dots+\frac{r_k}{2^{n_{m+k}-n_m}}
\end{gather*}
is actually constant for all $m\ge m_2$ (with some $m_2\ge m_1$). Therefore we conclude that
\begin{gather*}
\frac{r_0}{2^{n_m}}+\frac{r_1}{2^{n_{m+1}}}+\dots+\frac{r_k}{2^{n_{m+k}}}=0 \qquad\text{for}\quad m\ge m_2.
\end{gather*}
Proceeding in a similar way, we deduce that for each integer $1\le\ell\le k$, there is an integer $m_k$ such that
\begin{gather*}
\frac{r_0}{\ell^{n_m}}+\frac{r_1}{\ell^{n_{m+1}}}+\dots+\frac{r_k}{\ell^{n_{m+k}}}=0 \qquad\text{for}\quad m\ge m_k.
\end{gather*}
This implies that $r_0=r_1=\dots=r_k=0$, which is a contradiction.
\end{proof}

\begin{lemma}
\label{LW2}
The Hankel determinants $\det\limits_{1\le i,j\le n}\bigl(\zeta(n_{i+j-1})\bigr)$ are not zero for infinitely many $n$.
\end{lemma}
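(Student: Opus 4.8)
The plan is to deduce Lemma~\ref{LW2} from Lemma~\ref{LW1} through the classical theorem of Kronecker on Hankel determinants: for a sequence $(c_m)_{m\ge1}$ of complex numbers, the power series $\sum_{m\ge1}c_mz^m$ represents a rational function of $z$ if and only if $\det_{1\le i,j\le n}(c_{i+j-1})=0$ for all sufficiently large $n$. Taking this for granted, suppose the Hankel determinants $\det_{1\le i,j\le n}\bigl(\zeta(n_{i+j-1})\bigr)$ were nonzero for only finitely many $n$. Then they would vanish from some point on, so Kronecker's theorem applied with $c_m=\zeta(n_m)$ would show that $f(z)=\sum_{m\ge1}\zeta(n_m)z^m$ is a rational function, contradicting Lemma~\ref{LW1}. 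Hence infinitely many of the determinants are nonzero.

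If one prefers to keep the note self-contained, only the implication ``eventual vanishing $\Rightarrow$ rationality'' of Kronecker's theorem is needed, and I would recover it by hand. Put $c_m=\zeta(n_m)$; since $c_1=\zeta(n_1)>1$, if $\det_{1\le i,j\le n}(c_{i+j-1})=0$ for all large $n$ there is a largest index $N$ with $H_N:=\det_{1\le i,j\le N}(c_{i+j-1})\ne0$. The top-left $N\times N$ block of the infinite Hankel matrix $\bigl(c_{i+j-1}\bigr)_{i,j\ge1}$ being nonsingular, its first $N$ rows are linearly independent, and the vanishing of the $(N+1)\times(N+1)$ block forces its last row into their span; this gives scalars $\lambda_1,\dots,\lambda_N$, uniquely determined by Cramer's rule applied to $H_N$, with
\begin{gather*}
c_{N+1+j}=\lambda_1c_{1+j}+\dots+\lambda_Nc_{N+j}\qquad\text{for}\quad j=0,1,\dots,N.
\end{gather*}
If the same $\lambda_1,\dots,\lambda_N$ make this identity hold for \emph{every} $j\ge0$, then $f(z)\bigl(1-\lambda_Nz-\lambda_{N-1}z^2-\dots-\lambda_1z^N\bigr)$ is a polynomial, so $f$ is rational and Lemma~\ref{LW1} is again contradicted.

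The step I expect to be the main obstacle is precisely the promotion of this finite family of relations to a bona fide linear recurrence valid for all $j$: this does not follow from a single vanishing Hankel determinant, and is the real content of Kronecker's theorem, equivalently of the square-block structure of the Pad\'e table. I would carry it out by induction on $j$, invoking at the inductive step past $j=N$ the vanishing of a sufficiently large Hankel determinant $\det_{1\le i,j\le n}(c_{i+j-1})$ together with the nonsingularity of its $N\times N$ corner: one shows that the Hankel structure then forces every such block to have rank exactly $N$, so that each of its rows beyond the $N$-th lies in the span of the first $N$ with the coefficients $\lambda_1,\dots,\lambda_N$ already found, and reading off the newly available entry yields the next instance of the recurrence. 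Alternatively, one may simply cite Kronecker's theorem and conclude as in the first paragraph.
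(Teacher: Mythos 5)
Your first paragraph is precisely the paper's proof: Lemma~\ref{LW1} together with Kronecker's theorem on Hankel determinants of rational power series, which the paper cites via \cite{Kr81} and \cite[Division~7, Problem~24]{PS76}. The additional self-contained sketch is correct in outline but unnecessary here, and the difficulty you flag (promoting the finitely many relations to a genuine recurrence) is a gap in the proof of Kronecker's theorem itself, which the paper sidesteps by citation.
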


\begin{proof}
This follows from Lemma~\ref{LW1}, together with a well known result of Kronecker (see \cite[pp.~566--567]{Kr81} or
\cite[Division~7, Problem~24]{PS76}).
\end{proof}

Note that the statement of Lemma~\ref{LW2} in the case when $n_m=am+b$ can be obtained using the argument from \cite[Lemma~2.3]{Mo09}:
in fact, the Hankel determinants are all \emph{positive} in this case.

\section{Proof of Theorem~\ref{th-main}}
\label{proof}

If there are only f\/initely many $n\in\mathbb{N}$ for which $\zeta(an+b)$ is irrational, then we can assume without loss of generality,
by replacing $b$ with $an_0+b$ for a suitable choice of $n_0$, that all of the numbers $\zeta(an+b)$ are rational.
It is clear from the def\/inition that $q_k\,|\, q_n$ for all $k\le n$, and that
\begin{gather*}
q_{n+1}q_{n+2}\dotsb q_{2n}H_n^{(a,b)}[\zeta]=\det_{1\le i,j\le n}\bigl(q_{i+n}\zeta(a(i+j)+b)\bigr)\in\mathbb Z
\qquad\text{for}\quad n\in\mathbb{N}.
\end{gather*}
By Lemma~\ref{LW2} there are inf\/initely many indices $n$ for which the determinants $H_n^{(a,b)}[\zeta]$ are nonzero, therefore
\begin{gather*}
q_{n+1}q_{n+2}\dotsb q_{2n}\cdot|H_n^{(a,b)}[\zeta]|\ge1.
\end{gather*}
If it were the case that $q_n\le C^n$ for some $C>0$ and all $n$, then we would deduce that
$|H_n^{(a,b)}[\zeta]|\ge C^{-n(3n+1)/2}$ for inf\/initely many $n$. However, for suf\/f\/iciently large $n$ this would contradict Lemma~\ref{L-gest},
which leads us to conclude that the sequence $q_n^{1/n}$ is unbounded as~$n\to\nobreak\infty$.

\section{Concluding remarks}
\label{remarks}

The same techniques used to prove Theorem~\ref{th-main} can also be applied to a much broader class of Dirichlet series.
For example, similar results can be obtained for more Dirichlet $L$-functions and even $L$-functions attached to modular forms,
by extending the argument in the proof of Lemma~\ref{LW1}. In fact, one does not even need
to restrict to values of these functions at integers. In a dif\/ferent direction, one can deal with the values at subsequences which tend to inf\/inity
faster than arithmetic progressions (for example, the sequence $\zeta(2n^2+1)$, where $n=1,2,\dots$).
The estimates from Section~\ref{est} would then become sharper, and the corresponding growth of the common denominators,
provided inf\/initely many of the $L$-values are rational, can be then shown to be faster.
Of course, as mentioned in the introduction, we expect that the nonzero zeta and $L$-function values at positive integers are always irrational (and even transcendental).
Our principal result here is only to serve as an illustration of a much deeper relationship between Hankel determinants and arithmetic.

\subsection*{Acknowledgements}

We would like to thank the anonymous referees for their healthy criticism on the earlier draft of the note.
Research of f\/irst author (Alan Haynes) was supported by EPSRC grants EP/L001462, EP/J00149X, EP/M023540.
Research of second author (Wadim Zudilin) was supported by Australian Research Council grant DP170100466.

\pdfbookmark[1]{References}{ref}
\LastPageEnding


\begin{thebibliography}{99}
\footnotesize\itemsep=0pt

\bibitem{Ap79}
Ap{\'e}ry R., Irrationalit\'e de $\zeta(2)$ et $\zeta(3)$,
  \textit{Ast\'erisque}  \textbf{61} (1979), 11--13.

\bibitem{BR01}
Ball K., Rivoal T., Irrationalit\'e d'une inf\/init\'e de valeurs de la fonction
  z\^eta aux entiers impairs, \href{http://dx.doi.org/10.1007/s002220100168}{\textit{Invent. Math.}} \textbf{146} (2001),
  193--207.

\bibitem{Kr81}
Kronecker L., Zur Theorie der Elimination einer Variabeln aus zwei
  algebraischen Gleichungen, \textit{Berl. Monatsber.} \textbf{1881} (1881),
  535--600.

\bibitem{Mo09}
Monien H., Hankel determinants of Dirichlet series, \href{http://arxiv.org/abs/0901.1883}{arXiv:0901.1883}.


\bibitem{Mo11}
Monien H., {P}ersonal communication, 2011.

\bibitem{PS76}
P{\'o}lya G., Szeg{\H{o}} G., Problems and theorems in analysis, {V}ol.~{II},
  \href{http://dx.doi.org/10.1007/978-1-4757-6292-1}{Springer-Verlag}, New York~-- Heidelberg, 1976.

\end{thebibliography}
\end{document}